\documentclass{amsart}
\usepackage{amssymb}
\usepackage{amsfonts}

\setcounter{MaxMatrixCols}{10}

\newtheorem{theorem}{Theorem}
\theoremstyle{plain}

\newtheorem{corollary}{Corollary}

\newtheorem{definition}{Definition}

\newtheorem{lemma}{Lemma}

\newtheorem{remark}{Remark}

\numberwithin{equation}{section}
\input{tcilatex}

\begin{document}
\title[Hermite-Hadamard-type inequalities]{Hermite-Hadamard-type
inequalities for $\left( g,\varphi _{h}\right) -$ convex dominated functions}
\author{M. Emin \"{O}zdemir}
\address{Atat\"{u}rk University, K.K. Education Faculty, Department of
Mathematics, 25240 Campus, Erzurum, Turkey}
\email{emos@atauni.edu.tr}
\author{Mustafa G\"{u}rb\"{u}z$^{\blacksquare }$}
\address{A\u{g}r\i\ \.{I}brahim \c{C}e\c{c}en University, Faculty of
Education, Department of Mathematics, 04100, A\u{g}r\i , Turkey}
\email{mgurbuz@agri.edu.tr}
\thanks{Corresponding Author$^{\blacksquare }$}
\author{Havva Kavurmac\i }
\address{A\u{g}r\i\ \.{I}brahim \c{C}e\c{c}en University, Faculty of Science
and Letters, Department of Mathematics, 04100, A\u{g}r\i , Turkey}
\email{hkavurmaci@agri.edu.tr}
\date{July 17, 2012}
\subjclass[2000]{ Primary 26D15, Secondary 26D10, 05C38}
\keywords{Convex dominated functions, Hermite-Hadamard Inequality, $\varphi
_{h}-$convex functions, $\left( g,s\right) -$convex dominated functions}

\begin{abstract}
In this paper, we introduce the notion of $\left( g,\varphi _{h}\right) -$%
convex dominated function and present some properties of them. Finally, we
present a version of Hermite-Hadamard-type inequalities for $\left(
g,\varphi _{h}\right) -$convex dominated functions. Our results generalize
the Hermite-Hadamard-type inequalities in [2], [4] and [6].
\end{abstract}

\maketitle

\section{Introduction}

The inequality%
\begin{equation}
f\left( \frac{a+b}{2}\right) \leq \frac{1}{b-a}\int_{a}^{b}f\left( x\right)
dx\leq \frac{f\left( a\right) +f\left( b\right) }{2}  \label{h}
\end{equation}%
which holds for all convex functions $f:[a,b]\rightarrow 
\mathbb{R}
$, is known in the literature as Hermite-Hadamard's inequality.

In \cite{DI}, Dragomir and Ionescu introduced the following class of
functions.

\begin{definition}
Let $g:I\rightarrow 
\mathbb{R}
$ be a convex function on the interval $I.$ The function $f:I\rightarrow 
\mathbb{R}
$ is called $g-$convex dominated on $I$ if the following condition is
satisfied:%
\begin{eqnarray*}
&&\left\vert \lambda f\left( x\right) +\left( 1-\lambda \right) f\left(
y\right) -f\left( \lambda x+\left( 1-\lambda \right) y\right) \right\vert \\
&& \\
&\leq &\lambda g\left( x\right) +\left( 1-\lambda \right) g\left( y\right)
-g\left( \lambda x+\left( 1-\lambda \right) y\right)
\end{eqnarray*}%
for all $x,y\in I$ and $\lambda \in \left[ 0,1\right] .$
\end{definition}

In \cite{DPP}, Dragomir \textit{et al.} proved the following theorem for $g-$%
convex dominated functions related to (\ref{h}).

\label{s} Let $g:I\rightarrow 
\mathbb{R}
$ be a convex fuction and $f:I\rightarrow 
\mathbb{R}
$ be a $g-$convex dominated mapping. Then, for all $a,b\in I$ with $a<b,$ 
\begin{equation*}
\left\vert f\left( \frac{a+b}{2}\right) -\frac{1}{b-a}\int_{a}^{b}f\left(
x\right) dx\right\vert \leq \frac{1}{b-a}\int_{a}^{b}g\left( x\right)
dx-g\left( \frac{a+b}{2}\right)
\end{equation*}%
and%
\begin{equation*}
\left\vert \frac{f\left( a\right) +f\left( b\right) }{2}-\frac{1}{b-a}%
\int_{a}^{b}f\left( x\right) dx\right\vert \leq \frac{g\left( a\right)
+g\left( b\right) }{2}-\frac{1}{b-a}\int_{a}^{b}g\left( x\right) dx.
\end{equation*}%
In \cite{DI} and \cite{DPP}, the authors connect together some disparate
threads through a Hermite-Hadamard motif. The first of these threads is the
unifying concept of a $g-$convex-dominated function. In \cite{HHW}, Hwang 
\textit{et al.} established some inequalities of Fej\'{e}r type for $g-$%
convex-dominated functions. Finally, in \cite{KOS}\textit{, }\cite{OKT} and 
\cite{OTK} authors introduced several new different kinds of convex
-dominated functions and then gave Hermite-Hadamard-type inequalities for
this classes of functions.

In \cite{S}, S. Varo\v{s}anec introduced the following class of functions.

$I$ and $J$ are intervals in $%
\mathbb{R}
,$ $\left( 0,1\right) \subseteq J$ and functions $h$ and $f$ are real
non-negative functions defined on $J$ and $I$, respectively.

\begin{definition}
Let $h:J\rightarrow 
\mathbb{R}
$ be a non-negative function, $h\not\equiv 0$. We say that $f:I\rightarrow 
\mathbb{R}
$ is an $h-$convex function, or that $f$ belongs to the class $SX\left(
h,I\right) ,$ if $f$ is non-negative and for all $x,y\in I,$ $\alpha \in
(0,1],$ we have%
\begin{equation}
f(\alpha x+\left( 1-\alpha \right) y)\leq h\left( \alpha \right)
f(x)+h\left( 1-\alpha \right) f(y).  \label{z3}
\end{equation}
\end{definition}

If the inequality (\ref{z3}) is reversed, then $f$ is said to be $h-$%
concave, i.e. $f\in SV\left( h,I\right) .$

Youness have defined the $\varphi -$convex functions in \cite{Youness}. A
function $\varphi :\left[ a,b\right] \rightarrow \left[ c,d\right] $ where $%
\left[ a,b\right] \subset 
\mathbb{R}
$:

\begin{definition}
A function $f:\left[ a,b\right] \rightarrow 
\mathbb{R}
$ is said to be $\varphi -$convex on $\left[ a,b\right] $ if for every two
points $x\in \left[ a,b\right] ,$ $y\in \left[ a,b\right] $ and $t\in \left[
0,1\right] $ the following inequality holds:%
\begin{equation*}
f\left( t\varphi \left( x\right) +\left( 1-t\right) \varphi \left( y\right)
\right) \leq tf\left( \varphi \left( x\right) \right) +\left( 1-t\right)
f\left( \varphi \left( y\right) \right) .
\end{equation*}
\end{definition}

In \cite{S1}, Sar\i kaya defined a new kind of $\varphi -$convexity using $%
h- $convexity as following:

\begin{definition}
Let I be an interval in $%
\mathbb{R}
$ and $h:\left( 0,1\right) \rightarrow \left( 0,\infty \right) $ be a given
function. We say that a function $f:I\rightarrow \lbrack 0,\infty )$ is $%
\varphi _{h}-$convex if%
\begin{equation}
f\left( t\varphi \left( x\right) +\left( 1-t\right) \varphi \left( y\right)
\right) \leq h\left( t\right) f\left( \varphi \left( x\right) \right)
+h\left( 1-t\right) f\left( \varphi \left( y\right) \right)  \label{a}
\end{equation}%
for all $x,y\in I$ and $t\in \left( 0,1\right) .$
\end{definition}

If inequality (\ref{a}) is reversed, then $f$ is said to be $\varphi _{h}-$%
concave. In particular, if $f$ satisfies (\ref{a}) with $h\left( t\right)
=t, $ $h\left( t\right) =t^{s}$ $\left( s\in \left( 0,1\right) \right) ,$ $%
h\left( t\right) =\frac{1}{t},$ and $h\left( t\right) =1,$ then $f$ is said
to be $\varphi -$convex, $\varphi _{s}-$convex, $\varphi -$Godunova-Levin
function and $\varphi -P-$function, respectively.

In the following sections our main results are given: We introduce the
notion of $\left( g,\varphi _{h}\right) -$convex dominated function and
present some properties of them. Finally, we present a version of
Hermite-Hadamard-type inequalities for $\left( g,\varphi _{h}\right) -$%
convex dominated functions. Our results generalize the Hermite-Hadamard-type
inequalities in \cite{DPP}, \cite{KOS} and \cite{OTK}.

\section{$\left( g,\protect\varphi _{h}\right) -$convex dominated functions}

\begin{definition}
\label{1} Let $h:\left( 0,1\right) \rightarrow \left( 0,\infty \right) $ be
a given function, $g:I\rightarrow \lbrack 0,\infty )$ be a given $\varphi
_{h}-$convex function. The real function $f:I\rightarrow \lbrack 0,\infty )$
is called $\left( g,\varphi _{h}\right) -$convex dominated on $I$ if the
following condition is satisfied%
\begin{eqnarray}
&&\left\vert h\left( t\right) f\left( \varphi \left( x\right) \right)
+h\left( 1-t\right) f\left( \varphi \left( y\right) \right) -f\left(
t\varphi \left( x\right) +\left( 1-t\right) \varphi \left( y\right) \right)
\right\vert  \label{b} \\
&&  \notag \\
&\leq &h\left( t\right) g\left( \varphi \left( x\right) \right) +h\left(
1-t\right) g\left( \varphi \left( y\right) \right) -g\left( t\varphi \left(
x\right) +\left( 1-t\right) \varphi \left( y\right) \right)  \notag
\end{eqnarray}%
for all $x,y\in I$ and $t\in (0,1)$.
\end{definition}

In particular, if $f$ satisfies (\ref{b}) with $h\left( t\right) =t,$ $%
h\left( t\right) =t^{s}$ $\left( s\in \left( 0,1\right) \right) ,$ $h\left(
t\right) =\frac{1}{t}$ and $h\left( t\right) =1,$ then $f$ is said to be $%
\left( g,\varphi \right) -$convex-dominated, $\left( g,\varphi _{s}\right) -$%
convex-dominated, $\left( g,\varphi _{Q(I)}\right) -$convex-dominted and $%
\left( g,\varphi _{P(I)}\right) -$convex-dominated functions, respectively.

The next simple characterisation of $\left( g,\varphi _{h}\right) -$convex
dominated functions holds.

\begin{lemma}
\label{2} Let $h:\left( 0,1\right) \rightarrow \left( 0,\infty \right) $ be
a given function, $g:I\rightarrow \lbrack 0,\infty )$ be a given $\varphi
_{h}-$convex function and $f:I\rightarrow \lbrack 0,\infty )$ be a real
function. The following statements are equivalent:
\end{lemma}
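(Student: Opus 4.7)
The statement of Lemma \ref{2} is cut off in the excerpt, but in analogy with the classical Dragomir--Ionescu characterisation of $g$-convex dominated functions (and with the $(g,s)$- and $(g,h)$-convex dominated variants in \cite{KOS,OTK}), the equivalent conditions are expected to be: (i) $f$ is $(g,\varphi_{h})$-convex dominated on $I$; (ii) the mappings $g+f$ and $g-f$ are $\varphi_{h}$-convex on $I$.

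\textbf{Plan.} The natural approach is to unpack the absolute value in \eqref{b} and show that it is exactly equivalent to a pair of $\varphi_{h}$-convexity inequalities. Write
\begin{equation*}
A_{t}(x,y) := h(t)f(\varphi(x)) + h(1-t)f(\varphi(y)) - f(t\varphi(x)+(1-t)\varphi(y)),
\end{equation*}
\begin{equation*}
B_{t}(x,y) := h(t)g(\varphi(x)) + h(1-t)g(\varphi(y)) - g(t\varphi(x)+(1-t)\varphi(y)).
\end{equation*}
Condition \eqref{b} reads $|A_{t}(x,y)| \le B_{t}(x,y)$, which is in turn equivalent to the conjunction
\begin{equation*}
A_{t}(x,y) \le B_{t}(x,y) \qquad \text{and} \qquad -A_{t}(x,y) \le B_{t}(x,y).
\end{equation*}

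\textbf{Key manipulation.} I would then rearrange each of these two scalar inequalities term by term. Moving the $f$-terms to the $g$-side in the first inequality collects into
\begin{equation*}
(g-f)(t\varphi(x)+(1-t)\varphi(y)) \le h(t)(g-f)(\varphi(x)) + h(1-t)(g-f)(\varphi(y)),
\end{equation*}
which is precisely the $\varphi_{h}$-convexity of $g-f$ in the sense of \eqref{a}. The same rearrangement applied to $-A_{t}(x,y)\le B_{t}(x,y)$ yields
\begin{equation*}
(g+f)(t\varphi(x)+(1-t)\varphi(y)) \le h(t)(g+f)(\varphi(x)) + h(1-t)(g+f)(\varphi(y)),
\end{equation*}
the $\varphi_{h}$-convexity of $g+f$. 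Conversely, adding these two inequalities recovers $2B_{t}(x,y)\ge 2|A_{t}(x,y)|$ once one checks the sign cases, so the implication (ii)$\Rightarrow$(i) is immediate. Both directions therefore reduce to elementary algebraic rearrangement, with the $h(t), h(1-t)$ coefficients carried along unchanged.

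\textbf{Main obstacle.} There is no deep difficulty; the proof is a direct algebraic equivalence. The only point requiring care is the hypothesis that $\varphi_{h}$-convex functions in Sar\i kaya's definition are assumed non-negative, so one should note explicitly that the inequality $|A_{t}|\le B_{t}$ forces $B_{t}\ge 0$, and should remark (or impose as a standing assumption) that the functions $g\pm f$ take values in $[0,\infty)$ whenever that non-negativity is needed for the formal membership in the class $SX(h,I)$. Otherwise the equivalence is purely formal and stated in terms of the defining inequality \eqref{a}.
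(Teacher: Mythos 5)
Your reconstruction of the statement is only partially right, and that is where the gap lies. The paper's lemma lists \emph{three} equivalent conditions, not two: (1) $f$ is $\left( g,\varphi _{h}\right) -$convex dominated on $I$; (2) the mappings $g-f$ and $g+f$ are $\varphi _{h}-$convex on $I$; (3) there exist two $\varphi _{h}-$convex mappings $l,k$ on $I$ such that $f=\frac{1}{2}\left( l-k\right) $ and $g=\frac{1}{2}\left( l+k\right) $. For the part you did treat, (1)$\Leftrightarrow$(2), your argument is exactly the paper's: write the absolute-value condition \eqref{b} as the conjunction $A_{t}\leq B_{t}$ and $-A_{t}\leq B_{t}$, and regroup terms to obtain the $\varphi _{h}-$convexity inequalities for $g-f$ and $g+f$ respectively. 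One small simplification: your converse direction needs no ``sign cases'' and no adding of inequalities, since the conjunction $A_{t}\leq B_{t}$, $-A_{t}\leq B_{t}$ is literally the statement $\left\vert A_{t}\right\vert \leq B_{t}$; the equivalence is immediate in both directions.

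What is missing is (2)$\Leftrightarrow$(3). Fortunately it is a one-line argument, and it is essentially what the paper does: given (2), define $l:=g+f$ and $k:=g-f$; these are $\varphi _{h}-$convex and satisfy $f=\frac{1}{2}\left( l-k\right) $ and $g=\frac{1}{2}\left( l+k\right) $, which is (3). Conversely, given the decomposition in (3), one has $g+f=l$ and $g-f=k$, which are $\varphi _{h}-$convex by hypothesis, giving (2). Appending this observation would complete your proposal. Finally, your closing remark about non-negativity is a genuine subtlety that the paper silently ignores: Sar\i kaya's definition of $\varphi _{h}-$convexity is stated for functions with values in $[0,\infty )$, and nothing in the hypotheses forces $g-f\geq 0$; so strictly speaking either the class must be understood without the non-negativity restriction, or an assumption such as $g\geq f$ (equivalently $k\geq 0$) has to be imposed. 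You were right to flag it, and this is a point on which your write-up is more careful than the paper itself.
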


\begin{enumerate}
\item $f$ is $\left( g,\varphi _{h}\right) -$convex dominated on $I.$

\item The mappings $g-f$ and $g+f$ are $\varphi _{h}-$ convex on $I.$

\item There exist two $\varphi _{h}-$convex mappings $l,k$ defined on $I$
such that%
\begin{equation*}
\begin{array}{ccc}
f=\frac{1}{2}\left( l-k\right) & \text{and} & g=\frac{1}{2}\left( l+k\right)%
\end{array}%
.
\end{equation*}
\end{enumerate}

\begin{proof}
1$\Longleftrightarrow $2 The condition (\ref{b}) is equivalent to%
\begin{eqnarray*}
&&g\left( t\varphi \left( x\right) +\left( 1-t\right) \varphi \left(
y\right) \right) -h\left( t\right) g(\varphi \left( x\right)
)-h(1-t)g(\varphi \left( y\right) ) \\
&& \\
&\leq &h\left( t\right) f(\varphi \left( x\right) )+h(1-t)f(\varphi \left(
y\right) )-f\left( t\varphi \left( x\right) +\left( 1-t\right) \varphi
\left( y\right) \right) \\
&& \\
&\leq &h\left( t\right) g(\varphi \left( x\right) )+h(1-t)g(\varphi \left(
y\right) )-g\left( t\varphi \left( x\right) +\left( 1-t\right) \varphi
\left( y\right) \right)
\end{eqnarray*}%
for all $x,y\in I$ and $t\in \left[ 0,1\right] .$ The two inequalities may
be rearranged as%
\begin{eqnarray*}
&&\left( g+f\right) \left( t\varphi \left( x\right) +\left( 1-t\right)
\varphi \left( y\right) \right) \\
&& \\
&\leq &h\left( t\right) \left( g+f\right) (\varphi \left( x\right)
)+h(1-t)\left( g+f\right) (\varphi \left( y\right) )
\end{eqnarray*}%
and%
\begin{eqnarray*}
&&\left( g-f\right) \left( t\varphi \left( x\right) +\left( 1-t\right)
\varphi \left( y\right) \right) \\
&& \\
&\leq &h\left( t\right) \left( g-f\right) (\varphi \left( x\right)
)+h(1-t)\left( g-f\right) (\varphi \left( y\right) )
\end{eqnarray*}%
which are eqivalent to the $\varphi _{h}-$convexity of $g+f$ and $g-f,$
respectively.

2$\Longleftrightarrow $3 Let we define the mappings $f,$ $g$ as $f=\frac{1}{2%
}\left( l-k\right) $ and $g=\frac{1}{2}\left( l+k\right) $. Then if we sum
and subtract $f$ and $g,$ respectively, we have $g+f=l$ and $g-f=k.$ By the
condition 2 in Lemma \ref{2}, the mappings $g-f$ and $g+f$ are $\varphi
_{h}- $convex on $I,$ so $l,k$ are $\varphi _{h}-$convex mappings on $I$ too.
\end{proof}

\begin{theorem}
\label{3} Let $h:\left( 0,1\right) \rightarrow \left( 0,\infty \right) $ be
a given function, $g:I\rightarrow \lbrack 0,\infty )$ be a given $\varphi
_{h}-$convex function. If $f:I\rightarrow \lbrack 0,\infty )$ is Lebesgue
integrable and $\left( g,\varphi _{h}\right) -$convex dominated on $I$ for
linear continuous function $\varphi :\left[ a,b\right] \rightarrow \left[ a,b%
\right] ,$ then the following inequalities hold:%
\begin{eqnarray}
&&\left\vert \frac{1}{\varphi \left( b\right) -\varphi \left( a\right) }%
\int_{\varphi \left( a\right) }^{\varphi \left( b\right) }f\left( x\right)
dx-\frac{1}{2h\left( \frac{1}{2}\right) }f\left( \frac{\varphi \left(
a\right) +\varphi \left( b\right) }{2}\right) \right\vert  \label{c} \\
&&  \notag \\
&\leq &\frac{1}{\varphi \left( b\right) -\varphi \left( a\right) }%
\int_{\varphi \left( a\right) }^{\varphi \left( b\right) }g\left( x\right)
dx-\frac{1}{2h\left( \frac{1}{2}\right) }g\left( \frac{\varphi \left(
a\right) +\varphi \left( b\right) }{2}\right)  \notag
\end{eqnarray}%
and%
\begin{eqnarray}
&&\left\vert \left[ f\left( \varphi \left( a\right) \right) +f\left( \varphi
\left( b\right) \right) \right] \int_{0}^{1}h\left( t\right) dt-\frac{1}{%
\varphi \left( b\right) -\varphi \left( a\right) }\int_{\varphi \left(
a\right) }^{\varphi \left( b\right) }f\left( x\right) dx\right\vert
\label{d} \\
&&  \notag \\
&\leq &\left[ g\left( \varphi \left( a\right) \right) +g\left( \varphi
\left( b\right) \right) \right] \int_{0}^{1}h\left( t\right) dt-\frac{1}{%
\varphi \left( b\right) -\varphi \left( a\right) }\int_{\varphi \left(
a\right) }^{\varphi \left( b\right) }g\left( x\right) dx  \notag
\end{eqnarray}%
for all $x,y\in I$ and $t\in \left[ 0,1\right] .$
\end{theorem}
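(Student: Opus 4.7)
The plan is to leverage Lemma~\ref{2}, which reduces $(g,\varphi_{h})$-convex dominance of $f$ to $\varphi_{h}$-convexity of the two auxiliary mappings $g+f$ and $g-f$. Once this reduction is in hand, the problem becomes a standard Hermite--Hadamard estimate applied to each of these two $\varphi_{h}$-convex mappings, and the absolute value on the left-hand sides of (\ref{c}) and (\ref{d}) is absorbed by combining the two resulting one-sided inequalities via the identity $|A|\le C \iff A\le C$ and $-A\le C$.

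First I would establish a base Hermite--Hadamard-type double inequality for any $\varphi_{h}$-convex $F$ on $I$ with $\varphi$ linear continuous:
\begin{equation*}
\frac{1}{2h(1/2)}\,F\!\left(\frac{\varphi(a)+\varphi(b)}{2}\right)
\leq \frac{1}{\varphi(b)-\varphi(a)}\int_{\varphi(a)}^{\varphi(b)} F(x)\,dx
\leq [F(\varphi(a))+F(\varphi(b))]\int_0^1 h(t)\,dt.
\end{equation*}
For the left inequality, linearity of $\varphi$ lets me write $\tfrac{1}{2}[t\varphi(a)+(1-t)\varphi(b)]+\tfrac{1}{2}[(1-t)\varphi(a)+t\varphi(b)]=(\varphi(a)+\varphi(b))/2$, so the midpoint is expressible as a $\varphi_{h}$-average of $\varphi(x_{t})$ and $\varphi(y_{t})$ where $x_{t}=ta+(1-t)b$ and $y_{t}=(1-t)a+tb$; applying (\ref{a}) at parameter $1/2$ to these points, then integrating in $t$ and performing the linear change of variable $u=t\varphi(a)+(1-t)\varphi(b)$, yields the lower bound. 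For the right inequality, apply (\ref{a}) directly with $x=a$, $y=b$, and integrate in $t$, again using the same change of variable.

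Next I would apply this double inequality separately to $F=g+f$ and $F=g-f$, both $\varphi_{h}$-convex by Lemma~\ref{2}. Each of the four resulting inequalities rearranges into a statement of the form $\pm[\,\text{quantity inside }|\cdot|\text{ in (\ref{c}) or (\ref{d})}\,]\leq[\,\text{RHS of (\ref{c}) or (\ref{d})}\,]$, because the operations $F\mapsto F((\varphi(a)+\varphi(b))/2)$ and $F\mapsto\int F$ are linear and the $g$-contributions on the two sides cancel against the right-hand sides of (\ref{c}) and (\ref{d}). Pairing the two midpoint bounds (from $g\pm f$) gives (\ref{c}); pairing the two endpoint bounds gives (\ref{d}).

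The main obstacle is the sign bookkeeping in the final step: one must carefully track which $\pm$ comes from the $g+f$ branch versus the $g-f$ branch so that the two inequalities combine into a single absolute value bound rather than a weaker estimate. Everything is linear so no genuinely new analytic input is required beyond the two base inequalities proved in the second paragraph. A secondary subtlety is that $g-f$ need not be nonnegative, so strictly speaking it falls outside Sar\i kaya's definition of $\varphi_{h}$-convexity; however, the Hermite--Hadamard estimates above use only the defining inequality (\ref{a}), which holds for $g\pm f$ by Lemma~\ref{2} irrespective of sign, so this causes no problem in the proof.
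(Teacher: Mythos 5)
Your proposal is correct, and it takes a genuinely different route from the paper. The paper never invokes Lemma \ref{2}: it works directly with the absolute-value inequality in Definition \ref{1}. For (\ref{c}) it substitutes $t=\tfrac{1}{2}$, $x=\lambda a+(1-\lambda)b$, $y=(1-\lambda)a+\lambda b$, uses linearity of $\varphi$ to identify the midpoint $\frac{\varphi(a)+\varphi(b)}{2}$, and integrates the resulting inequality in $\lambda$ over $[0,1]$; for (\ref{d}) it substitutes $x=a$, $y=b$, integrates in $t$, and uses the change of variable $x=t\varphi(a)+(1-t)\varphi(b)$ together with $\int_{0}^{1}h(t)\,dt=\int_{0}^{1}h(1-t)\,dt$. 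Note that this direct integration of an inequality of the form $|u(\lambda)|\leq v(\lambda)$ tacitly uses $\left\vert \int u\right\vert \leq \int \left\vert u\right\vert$. Your argument replaces that step by the decomposition $F=g\pm f$: you first isolate a Hermite--Hadamard double inequality for an arbitrary $\varphi_{h}$-convex $F$ (essentially Sar\i kaya's theorem in \cite{S1}, and proved by exactly the same two computations the paper performs on $f$ and $g$ jointly), then apply it to $g+f$ and $g-f$ and recombine the four one-sided estimates into the two absolute-value bounds; I checked the sign bookkeeping and it works out as you claim. What your route buys is modularity --- the dominated-function theorem becomes a formal consequence of Lemma \ref{2} plus the plain $\varphi_{h}$-convex Hermite--Hadamard inequality, with no integration of absolute values --- and you also correctly flag and dispose of the subtlety that $g-f$ may fail to be nonnegative (so it is not literally $\varphi_{h}$-convex in Sar\i kaya's sense, but satisfies inequality (\ref{a}), which is all that is used); the paper, and indeed Lemma \ref{2} as stated, glosses over this point. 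What the paper's route buys is brevity and self-containedness: it needs only Definition \ref{1} and two integrations.
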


\begin{proof}
By the Definition \ref{1} with $t=\frac{1}{2},\ x=\lambda a+(1-\lambda )b,\
y=\left( 1-\lambda \right) a+\lambda b\ $and$\ \lambda \in \left[ 0,1\right]
,$ as the mapping $f$ is $\left( g,\varphi _{h}\right) -$convex dominated
function, we have that%
\begin{eqnarray*}
&&\left\vert h\left( \frac{1}{2}\right) \left[ f\left( \varphi \left(
\lambda a+(1-\lambda )b\right) \right) +f\left( \varphi \left( \left(
1-\lambda \right) a+\lambda b\right) \right) \right] -f\left( \frac{\varphi
\left( \lambda a+(1-\lambda )b\right) +\varphi \left( \left( 1-\lambda
\right) a+\lambda b\right) }{2}\right) \right\vert \\
&\leq & \\
&&h\left( \frac{1}{2}\right) \left[ g\left( \varphi \left( \lambda
a+(1-\lambda )b\right) \right) +g\left( \varphi \left( \left( 1-\lambda
\right) a+\lambda b\right) \right) \right] -g\left( \frac{\varphi \left(
\lambda a+(1-\lambda )b\right) +\varphi \left( \left( 1-\lambda \right)
a+\lambda b\right) }{2}\right) .
\end{eqnarray*}%
Then using the linearity of $\varphi -$function, we have%
\begin{eqnarray*}
&&\left\vert h\left( \frac{1}{2}\right) \left[ f\left( \lambda \varphi
\left( a\right) +\left( 1-\lambda \right) \varphi \left( b\right) \right)
+f\left( \left( 1-\lambda \right) \varphi \left( a\right) +\lambda \varphi
\left( b\right) \right) \right] -f\left( \frac{\varphi \left( a\right)
+\varphi \left( b\right) }{2}\right) \right\vert \\
&\leq & \\
&&h\left( \frac{1}{2}\right) \left[ g\left( \lambda \varphi \left( a\right)
+\left( 1-\lambda \right) \varphi \left( b\right) \right) +g\left( \left(
1-\lambda \right) \varphi \left( a\right) +\lambda \varphi \left( b\right)
\right) \right] -g\left( \frac{\varphi \left( a\right) +\varphi \left(
b\right) }{2}\right) .
\end{eqnarray*}%
If we integrate the above inequality with respect to $\lambda $ over $\left[
0,1\right] ,$ the inequality in (\ref{c}) is proved.

To prove the inequality in (\ref{d}), firstly we use the Definition \ref{1}
for $x=a$ and $y=b$, we have%
\begin{eqnarray*}
&&\left\vert h\left( t\right) f\left( \varphi \left( a\right) \right)
+h\left( 1-t\right) f\left( \varphi \left( b\right) \right) -f\left(
t\varphi \left( a\right) +\left( 1-t\right) \varphi \left( b\right) \right)
\right\vert \\
&\leq & \\
&&h\left( t\right) g\left( \varphi \left( a\right) \right) +h\left(
1-t\right) g\left( \varphi \left( b\right) \right) -g\left( t\varphi \left(
a\right) +\left( 1-t\right) \varphi \left( b\right) \right) .
\end{eqnarray*}%
Then, we integrate the above inequality with respect to $t$ over $\left[ 0,1%
\right] ,$ we get%
\begin{eqnarray*}
&&\left\vert f\left( \varphi \left( a\right) \right) \int_{0}^{1}h\left(
t\right) dt+f\left( \varphi \left( b\right) \right) \int_{0}^{1}h\left(
1-t\right) dt-\int_{0}^{1}f\left( t\varphi \left( a\right) +\left(
1-t\right) \varphi \left( b\right) \right) dt\right\vert \\
&& \\
&\leq &g\left( \varphi \left( a\right) \right) \int_{0}^{1}h\left( t\right)
dt+g\left( \varphi \left( b\right) \right) \int_{0}^{1}h\left( 1-t\right)
dt-\int_{0}^{1}g\left( t\varphi \left( a\right) +\left( 1-t\right) \varphi
\left( b\right) \right) dt.
\end{eqnarray*}%
If we substitute $x=t\varphi \left( a\right) +\left( 1-t\right) \varphi
\left( b\right) $ and use the fact that $\int_{0}^{1}h\left( t\right)
dt=\int_{0}^{1}h\left( 1-t\right) dt,$ we get%
\begin{eqnarray*}
&&\left\vert \left[ f\left( \varphi \left( a\right) \right) +f\left( \varphi
\left( b\right) \right) \right] \int_{0}^{1}h\left( t\right) dt-\frac{1}{%
\varphi \left( b\right) -\varphi \left( a\right) }\int_{\varphi \left(
a\right) }^{\varphi \left( b\right) }f\left( x\right) dx\right\vert \\
&& \\
&\leq &\left[ g\left( \varphi \left( a\right) \right) +g\left( \varphi
\left( b\right) \right) \right] \int_{0}^{1}h\left( t\right) dt-\frac{1}{%
\varphi \left( b\right) -\varphi \left( a\right) }\int_{\varphi \left(
a\right) }^{\varphi \left( b\right) }g\left( x\right) dx.
\end{eqnarray*}%
So, the proof is completed.
\end{proof}

\begin{corollary}
Under the assumptions of Theorem \ref{3} with $h\left( t\right) =t,$ $t\in
\left( 0,1\right) $, we have%
\begin{eqnarray}
&&\left\vert \frac{1}{\varphi \left( b\right) -\varphi \left( a\right) }%
\int_{\varphi \left( a\right) }^{\varphi \left( b\right) }f\left( x\right)
dx-f\left( \frac{\varphi \left( a\right) +\varphi \left( b\right) }{2}%
\right) \right\vert  \label{e} \\
&\leq &  \notag \\
&&\frac{1}{\varphi \left( b\right) -\varphi \left( a\right) }\int_{\varphi
\left( a\right) }^{\varphi \left( b\right) }g\left( x\right) dx-g\left( 
\frac{\varphi \left( a\right) +\varphi \left( b\right) }{2}\right)  \notag
\end{eqnarray}%
and%
\begin{eqnarray}
&&\left\vert \frac{f\left( \varphi \left( a\right) \right) +f\left( \varphi
\left( b\right) \right) }{2}-\frac{1}{\varphi \left( b\right) -\varphi
\left( a\right) }\int_{\varphi \left( a\right) }^{\varphi \left( b\right)
}f\left( x\right) dx\right\vert  \label{f} \\
&\leq &  \notag \\
&&\frac{g\left( \varphi \left( a\right) \right) +g\left( \varphi \left(
b\right) \right) }{2}-\frac{1}{\varphi \left( b\right) -\varphi \left(
a\right) }\int_{\varphi \left( a\right) }^{\varphi \left( b\right) }g\left(
x\right) dx.  \notag
\end{eqnarray}

\begin{remark}
If function $\varphi $ is the identity in (\ref{e}) and (\ref{f}), then they
reduce to Hermite-Hadamard type inequalities for convex dominated functions
proved by Dragomir, Pearce and Pe\v{c}ari\'{c} in \cite{DPP}.
\end{remark}
\end{corollary}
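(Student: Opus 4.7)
The plan is to derive the corollary as a direct specialization of Theorem \ref{3} by substituting $h(t)=t$ into inequalities (\ref{c}) and (\ref{d}) and simplifying the two $h$-dependent constants that appear. No new estimates are needed; the work is entirely bookkeeping of the substitution.

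First I would verify that the hypotheses of Theorem \ref{3} remain satisfied when $h(t)=t$, so that (\ref{c}) and (\ref{d}) are legitimately available: $h:(0,1)\to(0,\infty)$ is the identity (positive-valued), $g$ is by assumption $\varphi_{h}$-convex with this $h$ (i.e.\ $\varphi$-convex in the sense of Youness), and $f$ is $(g,\varphi_{h})$-convex dominated. Under these assumptions, inequalities (\ref{c}) and (\ref{d}) hold.

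Next I would compute the two constants. For (\ref{c}), the coefficient in front of $f\bigl(\tfrac{\varphi(a)+\varphi(b)}{2}\bigr)$ and $g\bigl(\tfrac{\varphi(a)+\varphi(b)}{2}\bigr)$ is $\tfrac{1}{2h(1/2)}$; with $h(t)=t$ this equals $\tfrac{1}{2\cdot(1/2)}=1$, so (\ref{c}) collapses to (\ref{e}). For (\ref{d}), the coefficient in front of $f(\varphi(a))+f(\varphi(b))$ and $g(\varphi(a))+g(\varphi(b))$ is $\int_{0}^{1}h(t)\,dt$; with $h(t)=t$ this equals $\int_{0}^{1}t\,dt=\tfrac{1}{2}$, and factoring the $\tfrac{1}{2}$ into the bracketed sums yields exactly (\ref{f}).

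There is essentially no obstacle here: the corollary is a pure substitution from the theorem, and the only thing to watch is that the two quantities $\tfrac{1}{2h(1/2)}$ and $\int_{0}^{1}h(t)\,dt$ both simplify cleanly under $h(t)=t$, which they do. Hence (\ref{e}) and (\ref{f}) follow, completing the proof.
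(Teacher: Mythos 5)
Your proof is correct and matches the paper's approach: the corollary is stated there as an immediate specialization of Theorem \ref{3}, obtained exactly as you do by setting $h(t)=t$ so that $\frac{1}{2h\left(\frac{1}{2}\right)}=1$ in (\ref{c}) and $\int_{0}^{1}h\left( t\right) dt=\frac{1}{2}$ in (\ref{d}). Your additional check that the hypotheses of the theorem are preserved under this choice of $h$ is sound bookkeeping, and nothing further is needed.
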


\begin{corollary}
Under the assumptions of Theorem \ref{3} with $h\left( t\right) =t^{s},$ $%
t,s\in \left( 0,1\right) $, we have%
\begin{eqnarray}
&&\left\vert \frac{1}{\varphi \left( b\right) -\varphi \left( a\right) }%
\int_{\varphi \left( a\right) }^{\varphi \left( b\right) }f\left( x\right)
dx-2^{s-1}f\left( \frac{\varphi \left( a\right) +\varphi \left( b\right) }{2}%
\right) \right\vert  \label{i} \\
&\leq &  \notag \\
&&\frac{1}{\varphi \left( b\right) -\varphi \left( a\right) }\int_{\varphi
\left( a\right) }^{\varphi \left( b\right) }g\left( x\right)
dx-2^{s-1}g\left( \frac{\varphi \left( a\right) +\varphi \left( b\right) }{2}%
\right)  \notag
\end{eqnarray}%
and%
\begin{eqnarray}
&&\left\vert \frac{f\left( \varphi \left( a\right) \right) +f\left( \varphi
\left( b\right) \right) }{s+1}-\frac{1}{\varphi \left( b\right) -\varphi
\left( a\right) }\int_{\varphi \left( a\right) }^{\varphi \left( b\right)
}f\left( x\right) dx\right\vert  \label{j} \\
&\leq &  \notag \\
&&\frac{g\left( \varphi \left( a\right) \right) +g\left( \varphi \left(
b\right) \right) }{s+1}-\frac{1}{\varphi \left( b\right) -\varphi \left(
a\right) }\int_{\varphi \left( a\right) }^{\varphi \left( b\right) }g\left(
x\right) dx.  \notag
\end{eqnarray}

\begin{remark}
If function $\varphi $ is the identity in (\ref{i}) and (\ref{j}), then they
reduce to Hermite-Hadamard type inequalities for $\left( g,s\right) -$convex
dominated functions proved by Kavurmac\i , \"{O}zdemir and Sar\i kaya in 
\cite{KOS}.
\end{remark}
\end{corollary}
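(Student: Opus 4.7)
The plan is to obtain the two inequalities (\ref{i}) and (\ref{j}) by directly specializing the two conclusions (\ref{c}) and (\ref{d}) of Theorem \ref{3} to the particular choice $h(t)=t^{s}$, $t,s\in(0,1)$. Since Theorem \ref{3} assumes only that $h:(0,1)\to(0,\infty)$ is the given function governing the $\varphi_{h}$-convex dominance, and $t^{s}$ is indeed positive on $(0,1)$, the hypotheses of Theorem \ref{3} transfer verbatim to this $h$. So no re-derivation from Definition \ref{1} is needed; the corollary is purely a matter of evaluating the two constants that appear in (\ref{c}) and (\ref{d}).

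First I would handle the midpoint inequality (\ref{i}). The coefficient $\dfrac{1}{2h(1/2)}$ in (\ref{c}) becomes
\[
\frac{1}{2\cdot (1/2)^{s}}=\frac{2^{s}}{2}=2^{s-1},
\]
and substituting this into (\ref{c}) immediately yields (\ref{i}), with the integral terms and the values $f\!\left(\tfrac{\varphi(a)+\varphi(b)}{2}\right)$, $g\!\left(\tfrac{\varphi(a)+\varphi(b)}{2}\right)$ unchanged.

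Next I would handle the trapezoid-type inequality (\ref{j}). The coefficient $\int_{0}^{1}h(t)\,dt$ in (\ref{d}) becomes
\[
\int_{0}^{1}t^{s}\,dt=\frac{1}{s+1},
\]
which is finite because $s\in(0,1)$ guarantees $s>-1$. Substituting this value into (\ref{d}) produces (\ref{j}) at once.

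There is essentially no obstacle: the argument is a one-line instantiation of Theorem \ref{3} plus two elementary evaluations. The only points that merit a sentence of care are (i) confirming that $h(t)=t^{s}$ is admissible (it is positive on $(0,1)$, as required), and (ii) noting that the integrability of $t^{s}$ on $[0,1]$ for $s\in(0,1)$ justifies the second computation.
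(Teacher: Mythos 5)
Your proposal is correct and matches the paper's intent exactly: the corollary is stated there as an immediate specialization of Theorem \ref{3} to $h(t)=t^{s}$, with the same two evaluations $\frac{1}{2h(1/2)}=2^{s-1}$ and $\int_{0}^{1}t^{s}\,dt=\frac{1}{s+1}$ that you carried out. No further argument is needed, so your one-line instantiation is precisely the intended proof.
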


\begin{corollary}
Under the assumptions of Theorem \ref{3} with $h\left( t\right) =\frac{1}{t}%
, $ $t\in \left( 0,1\right) $, we have%
\begin{eqnarray}
&&\left\vert \frac{4}{\varphi \left( b\right) -\varphi \left( a\right) }%
\int_{\varphi \left( a\right) }^{\varphi \left( b\right) }f\left( x\right)
dx-f\left( \frac{\varphi \left( a\right) +\varphi \left( b\right) }{2}%
\right) \right\vert  \label{k} \\
&\leq &  \notag \\
&&\frac{4}{\varphi \left( b\right) -\varphi \left( a\right) }\int_{\varphi
\left( a\right) }^{\varphi \left( b\right) }g\left( x\right) dx-g\left( 
\frac{\varphi \left( a\right) +\varphi \left( b\right) }{2}\right) .  \notag
\end{eqnarray}

\begin{remark}
If function $\varphi $ is the identity in (\ref{k}), then it reduces to
Hermite-Hadamard type inequality for $\left( g,Q(I)\right) -$convex
dominated functions proved by \"{O}zdemir, Tun\c{c} and Kavurmac\i\ in \cite%
{OTK}.
\end{remark}
\end{corollary}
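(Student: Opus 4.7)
The plan is to derive inequality~(\ref{k}) as a direct specialization of Theorem~\ref{3} to the weight $h(t)=1/t$ on $(0,1)$. First I would verify that the hypotheses transfer: under this choice of $h$, the $\varphi_{h}$-convexity of $g$ becomes $g$ being a $\varphi$-Godunova-Levin function, and the $(g,\varphi_{h})$-convex dominance of $f$ is exactly the $(g,\varphi_{Q(I)})$-convex dominance recorded in the paragraph following Definition~\ref{1}. Lebesgue integrability of $f$ and the linear continuity of $\varphi:[a,b]\to[a,b]$ transfer unchanged. So inequality~(\ref{c}) of Theorem~\ref{3} is available verbatim for this $f$, $g$, and $\varphi$.

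Second, I would compute the explicit value of the coefficient $\tfrac{1}{2h(1/2)}$ appearing on both sides of (\ref{c}). Since $h(1/2)=1/(1/2)=2$, this coefficient equals $\tfrac{1}{4}$. Substituting $\tfrac{1}{4}$ in front of the midpoint terms $f\!\left(\tfrac{\varphi(a)+\varphi(b)}{2}\right)$ and $g\!\left(\tfrac{\varphi(a)+\varphi(b)}{2}\right)$ in (\ref{c}), and then multiplying both sides of the resulting inequality by $4$, turns $\tfrac{1}{\varphi(b)-\varphi(a)}$ into $\tfrac{4}{\varphi(b)-\varphi(a)}$ in front of the integrals while reducing the coefficients in front of the midpoint terms to $1$. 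This is exactly (\ref{k}), so the derivation amounts to a single arithmetic rescaling.

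The only point that merits comment, and the reason the corollary omits a trapezoidal companion to (\ref{k}) analogous to (\ref{f}) or (\ref{j}), is that the coefficient $\int_{0}^{1}h(t)\,dt$ appearing in inequality~(\ref{d}) of Theorem~\ref{3} becomes $\int_{0}^{1}\tfrac{1}{t}\,dt=+\infty$ for this choice of $h$. Consequently (\ref{d}) carries no finite content in the Godunova-Levin regime, and only the midpoint-type inequality (\ref{k}) survives the specialization. There is thus no genuine obstacle in proving (\ref{k}) itself; the one interpretive step is observing this divergence and therefore presenting only one inequality rather than two.
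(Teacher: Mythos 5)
Your proposal is correct and follows exactly the route the paper intends: the corollary is an immediate specialization of inequality (\ref{c}) in Theorem \ref{3} with $h(t)=1/t$, where $\tfrac{1}{2h(1/2)}=\tfrac{1}{4}$ and multiplying through by $4$ yields (\ref{k}). Your closing observation that the trapezoid-type inequality (\ref{d}) degenerates because $\int_{0}^{1}\tfrac{1}{t}\,dt$ diverges correctly explains why the paper states only the midpoint-type inequality in this case.
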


\begin{corollary}
Under the assumptions of Theorem \ref{3} with $h\left( t\right) =1,$ $t\in
\left( 0,1\right) $, we have%
\begin{eqnarray}
&&\left\vert \frac{2}{\varphi \left( b\right) -\varphi \left( a\right) }%
\int_{\varphi \left( a\right) }^{\varphi \left( b\right) }f\left( x\right)
dx-f\left( \frac{\varphi \left( a\right) +\varphi \left( b\right) }{2}%
\right) \right\vert  \label{l} \\
&\leq &  \notag \\
&&\frac{1}{\varphi \left( b\right) -\varphi \left( a\right) }\int_{\varphi
\left( a\right) }^{\varphi \left( b\right) }g\left( x\right) dx-g\left( 
\frac{\varphi \left( a\right) +\varphi \left( b\right) }{2}\right)  \notag
\end{eqnarray}%
and%
\begin{eqnarray}
&&\left\vert \left[ f\left( \varphi \left( a\right) \right) +f\left( \varphi
\left( b\right) \right) \right] -\frac{1}{\varphi \left( b\right) -\varphi
\left( a\right) }\int_{\varphi \left( a\right) }^{\varphi \left( b\right)
}f\left( x\right) dx\right\vert  \label{m} \\
&\leq &  \notag \\
&&\left[ g\left( \varphi \left( a\right) \right) +g\left( \varphi \left(
b\right) \right) \right] -\frac{1}{\varphi \left( b\right) -\varphi \left(
a\right) }\int_{\varphi \left( a\right) }^{\varphi \left( b\right) }g\left(
x\right) dx.  \notag
\end{eqnarray}

\begin{remark}
If function $\varphi $ is the identity in (\ref{l}) and (\ref{m}), then they
reduce to Hermite-Hadamard type inequalities for $\left( g,P(I)\right) -$%
convex dominated functions proved by \"{O}zdemir, Tun\c{c} and Kavurmac\i\
in \cite{OTK}.
\end{remark}
\end{corollary}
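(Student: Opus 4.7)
The plan is to obtain both inequalities by specializing Theorem \ref{3} to the case $h(t)\equiv 1$ on $(0,1)$. The hypothesis that $f$ is $\left(g,\varphi_{P(I)}\right)$-convex dominated is, by definition, the hypothesis of Theorem \ref{3} for this particular choice of $h$, and similarly $g$ being $\varphi_h$-convex with $h\equiv 1$ is exactly the $\varphi\text{-}P$-function condition assumed on $g$. Therefore no additional estimate is required; the argument reduces to substitution and simplification in (\ref{c}) and (\ref{d}).

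For (\ref{l}), I would first compute $h\!\left(\tfrac12\right)=1$, so that the factor $\frac{1}{2h(1/2)}$ appearing in (\ref{c}) collapses to $\frac{1}{2}$. Plugging this back into (\ref{c}) and clearing the fraction by multiplying both sides by $2$ yields the desired inequality (\ref{l}), in which the midpoint value of $f$ (respectively of $g$) appears alone and the integral average is scaled accordingly.

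For (\ref{m}), I would compute $\int_{0}^{1}h(t)\,dt=\int_{0}^{1}1\,dt=1$, so this prefactor disappears from both sides of (\ref{d}). What remains is precisely (\ref{m}), with $f(\varphi(a))+f(\varphi(b))$ and $g(\varphi(a))+g(\varphi(b))$ standing in place of the integrals of $h$ times the endpoint values.

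Since each inequality follows from direct substitution into Theorem \ref{3}, there is no substantive obstacle here; the only point that requires attention is the bookkeeping of the factor of $2$ when clearing the denominator $2h(1/2)=2$ in passing from (\ref{c}) to (\ref{l}).
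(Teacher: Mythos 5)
Your method---plain substitution of $h\equiv 1$ into Theorem \ref{3}---is the same one the paper relies on (it offers no separate proof of this corollary), and your treatment of the second inequality is complete and correct: since $\int_{0}^{1}h\left( t\right) dt=1$, inequality (\ref{d}) becomes (\ref{m}) verbatim.

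For the first inequality, however, what your calculation produces is not what is printed in (\ref{l}), and you assert without checking that it is. Setting $h\left( \tfrac{1}{2}\right) =1$ in (\ref{c}) and multiplying through by $2$ puts the factor $\frac{2}{\varphi \left( b\right) -\varphi \left( a\right) }$ in front of \emph{both} integrals, i.e.
\begin{equation*}
\left\vert \frac{2}{\varphi \left( b\right) -\varphi \left( a\right) }\int_{\varphi \left( a\right) }^{\varphi \left( b\right) }f\left( x\right) dx-f\left( \frac{\varphi \left( a\right) +\varphi \left( b\right) }{2}\right) \right\vert \leq \frac{2}{\varphi \left( b\right) -\varphi \left( a\right) }\int_{\varphi \left( a\right) }^{\varphi \left( b\right) }g\left( x\right) dx-g\left( \frac{\varphi \left( a\right) +\varphi \left( b\right) }{2}\right) ,
\end{equation*}
whereas the printed (\ref{l}) has $\frac{1}{\varphi \left( b\right) -\varphi \left( a\right) }$ in front of the integral of $g$. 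The printed version is not merely unreachable by your argument; it is false. Take $\varphi $ the identity on $\left[ a,b\right] =\left[ 0,1\right] $ and $f=g\equiv 1$: the constant function $1$ is a $\varphi -P-$function, and $f$ is trivially $\left( g,\varphi _{P(I)}\right) -$convex dominated, yet the left-hand side of (\ref{l}) equals $\left\vert 2-1\right\vert =1$ while its right-hand side equals $1-1=0$. So (\ref{l}) carries a typographical error---its right-hand side should also have the factor $2$, matching the pattern of (\ref{k}), where the factor $4$ correctly appears on both sides---and the true statement is the displayed inequality above, which your substitution does yield. The gap in your write-up is therefore not the method but the missing verification that the output of the substitution coincides with the claimed inequality; as printed, (\ref{l}) cannot be derived from Theorem \ref{3}, or at all.
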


\end{document}